
\documentclass[10pt]{amsart}
\usepackage{enumerate,amsmath,amssymb,latexsym,
amsfonts, amsthm, amscd, MnSymbol}


\setlength{\textwidth}{14.5cm}
\setlength{\textheight}{22cm}

\hoffset=-2.0cm
\voffset=-.8cm


\theoremstyle{plain}

\newtheorem{theorem}{Theorem}

\numberwithin{equation}{section}

\newcommand{\ra}{\rightarrow}

\newcommand{\LL}{\mathbb{L}}
\newcommand{\MM}{\mathbb{M}}

\addtocounter{section}{-1}

\addtocounter{theorem}{-1}


\begin{document}

\title {A Gr\"atzer-Schmidt theorem for the Lindenbaum-Tarski algebra of IPC}

\date{}

\author[P.L. Robinson]{P.L. Robinson}

\address{Department of Mathematics \\ University of Florida \\ Gainesville FL 32611  USA }

\email[]{paulr@ufl.edu}

\subjclass{} \keywords{}

\begin{abstract}

We prove a version of the Gr\"atzer-Schmidt theorem for the Lindenbaum-Tarski algebra associated to the Implicational Propositional Calculus. 

\end{abstract}

\maketitle

\medbreak

\section{Introduction} 

\medbreak 

Gr\"atzer and Schmidt established that the skeleton of a pseudocomplemented semilattice is naturally Boolean; see [1] for the details as they apply to a meet-semilattice with zero. For our purposes, it is convenient to express the theorem in its dual form: thus, let $\MM$ be a topped join-semilattice and write the (dual) pseudocomplement of $a \in \MM$ as $a^*$ so that $a^* \vee a = 1$ and if $x \in \MM$ satisfies $x \vee a = 1$ then $a^* \leqslant x$; the (dual) skeleton $S(\MM) = \{a^* : a \in \MM\}$ of $\MM$ is then a Boolean lattice for the induced partial order, with pairwise supremum given by $\vee$ and with $\inf \{a, b \} = (a^* \vee b^*)^*$ whenever $a, b \in S(\MM)$. The Implicational Propositional Calculus IPC, having the conditional $\supset$ as its only connective and incorporating the Peirce axiom scheme, has an associated Lindenbaum-Tarski algebra obtained by the identification of (syntactically) equivalent well-formed formulas. This algebra is naturally a join-semilattice by virtue of the Peirce axiom scheme and is topped by the equivalence class comprising all theorems. Now, let $Q$ be a fixed IPC formula and $q$ its equivalence class; for the equivalence class $z = [Z]$ of any IPC formula we write $z^q := [Z \supset Q]$. This well-defines an order-reversing unary operation $(\bullet)^q$ on the Lindenbaum-Tarski algebra: it satisfies the requirement $z^q \vee z = 1$ of a pseudocomplement (indeed, this amounts to a restatement of the Peirce scheme); but it does not satisfy the companion requirement that if $w \vee z = {\bf 1}$ then $z^q \leqslant w$. Nevertheless, we prove in Theorem \ref{main} that for each $Q$ the corresponding `skeleton' (comprising all $z^q$ as $z$ runs over the Lindenbaum-Tarski algebra) is still naturally a Boolean lattice. As might be expected, some resistance is offered by distributivity, for which we provide two proofs. 

\medbreak 

\section{Implicational  Propositional Calculus} 

\medbreak 

Throughout, we deal with the purely Implicational Propositional Calculus (IPC): this has the conditional ($\supset$) as its only connective and modus ponens (or MP) as its only inference rule and rests on the following axiom schemes: \par
(${\rm IPC}_1$) \; \; $X \supset (Y \supset X)$\par 
(${\rm IPC}_2$) \; \; $[X \supset (Y \supset Z)] \supset [(X \supset Y) \supset (X \supset Z)]$ \par 
(Peirce) \;  $[(X \supset Y) \supset X] \supset X$. \\
The first and second secure validity of $Z \supset Z$ as a theorem scheme, of the Deduction Theorem (DT) as a derived inference rule and of Hypothetical Syllogism (HS) as a special consequence; we shall use all of these freely, often without comment. We shall write $L$ for the set comprising all (well-formed) IPC formulas. We write $X \equiv Y$ to assert that IPC formulas $X$ and $Y$ are syntactically equivalent in the sense that both $X \vdash Y$ and $Y \vdash X$.

\medbreak 

Although IPC lacks negation, a partial substitute may be introduced as follows. Fix an IPC formula $Q \in L$: when $Z \in L$ is any IPC formula, write 
$$Q Z := Q(Z) := Z \supset Q$$
with the understanding that $QQ Z = (Z \supset Q) \supset Q$ and so forth. The following theorem is taken directly from Exercise 6.3 in [3]; as an exercise, the (omitted) proof offers a good introduction to IPC. 

\medbreak 

\begin{theorem} \label{Robbin} 
Each of the following is an IPC theorem scheme: \par
{\rm (1)} $(X \supset Y) \supset [(Y \supset Z) \supset (X \supset Z)]$ \par 
{\rm (2)} $(X \supset Y) \supset (QY \supset QX)$ \par 
{\rm (3)} $Z \supset QQZ$ \par
{\rm (4)} $QQQZ \supset QZ$ \par 
{\rm (5)} $QQY \supset QQ(X \supset Y)$ \par 
{\rm (6)} $QQX \supset [QY \supset Q(X \supset Y)]$ \par 
{\rm (7)} $QX \supset QQ(X \supset Y)$ \par 
{\rm (8)} $(QX \supset Y) \supset [(QQX \supset Y) \supset QQY].$ 
\end{theorem}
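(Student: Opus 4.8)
The plan is to work entirely with the derived machinery already in hand---the Deduction Theorem (DT), Hypothetical Syllogism (HS), and modus ponens (MP)---and to obtain the later schemes from the earlier ones by substitution. Item (1) is nothing but the formula-level expression of HS: assuming $X \supset Y$ and then $Y \supset Z$, HS yields $X \supset Z$, so two applications of DT discharge the hypotheses and give (1). Item (2) then costs nothing, being the instance of (1) obtained by setting $Z := Q$, since $QY = Y \supset Q$ and $QX = X \supset Q$. Item (3) is an equally direct DT argument: assume $Z$, then assume $Z \supset Q$, apply MP to reach $Q$, and discharge twice.

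Items (4) and (5) I would derive by feeding earlier schemes through the order-reversing operation (2), which plays the role of contraposition. For (4), apply (2) with $X := Z$ and $Y := QQZ$ to get $(Z \supset QQZ) \supset (QQQZ \supset QZ)$; the antecedent is exactly (3), so MP delivers $QQQZ \supset QZ$. For (5), start from the axiom ${\rm IPC}_1$ in the shape $Y \supset (X \supset Y)$, apply (2) to obtain $Q(X \supset Y) \supset QY$, and then apply (2) a second time to this last implication to obtain $QQY \supset QQ(X \supset Y)$.

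Items (6) and (8) are bookkeeping with DT, HS and MP. For (6), assume in turn $QQX$, $QY$ and $X \supset Y$; chaining $X \supset Y$ with $QY = Y \supset Q$ by HS gives $QX = X \supset Q$, whereupon $QQX = (X \supset Q) \supset Q$ yields $Q$ by MP, and three discharges finish. For (8), assume $QX \supset Y$, then $QQX \supset Y$, then $QY = Y \supset Q$; here HS applied to $QX \supset Y$ and $Y \supset Q$ produces $QX \supset Q$, which is precisely $QQX$, so the second hypothesis gives $Y$ by MP and the third gives $Q$; discharging yields $QQY = (Y \supset Q) \supset Q$ as required.

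The one genuinely resistant item is (7), and this is where the Peirce scheme must enter---unsurprisingly, since the operation $Z \mapsto QZ$ furnishes only a relative, rather than an absolute, negation, so that the naive intuitionistic route through ex falso is unavailable. I would reduce (7) by DT to the claim that $QX$ and $Q(X \supset Y)$ together entail $Q$. The trick is to manufacture a Peirce-ready antecedent: under these two hypotheses one derives $(Q \supset Y) \supset Q$ by assuming $Q \supset Y$, chaining it behind $QX = X \supset Q$ via HS to get $X \supset Y$, and then firing $Q(X \supset Y) = (X \supset Y) \supset Q$ to reach $Q$. Invoking the Peirce instance $[(Q \supset Y) \supset Q] \supset Q$ and applying MP then extracts $Q$ outright. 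Locating this particular Peirce instance is the crux of the whole theorem; once it is found, every step above is routine.
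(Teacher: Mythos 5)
Your proof is correct in every item, but note that the paper itself offers no proof of this theorem: it is taken from Exercise 6.3 of Robbin [3] and the proof is deliberately omitted as an exercise, so there is nothing to compare against line by line --- your argument supplies what the paper leaves out. All eight derivations check: (1) is HS packaged by two applications of DT; (2) is the substitution $Z := Q$ in (1); (3) is MP plus two discharges; (4) follows by MP from the instance $(Z \supset QQZ) \supset (QQQZ \supset QZ)$ of (2) together with (3); (5) is obtained by contraposing the ${\rm IPC}_1$ instance $Y \supset (X \supset Y)$ twice through (2); (6) and (8) are routine DT/HS/MP bookkeeping; and (7) reduces by DT to the deduction $QX, Q(X \supset Y) \vdash Q$, which you settle by first deriving $(Q \supset Y) \supset Q$ from the hypotheses (assume $Q \supset Y$, chain with $QX = X \supset Q$ by HS to get $X \supset Y$, then fire $Q(X \supset Y)$) and then invoking the Peirce instance $[(Q \supset Y) \supset Q] \supset Q$. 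Your placement of Peirce is exactly where the paper says it must occur --- it remarks, following [3], that part (7) involves the Peirce axiom scheme --- and your remaining derivations use only ${\rm IPC}_1$, ${\rm IPC}_2$, DT and HS, which is the sharpest possible accounting of which schemes rest on which axioms.
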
 

\medbreak 

The partial resemblance of $QZ$ to a negation of $Z$ is manifest in this theorem; it is interesting to trace the resemblance in subsequent theorems and their proofs. Incidentally, it is noted in [3] that part (7) here involves the Peirce axiom scheme. 

\medbreak 

We shall require several further IPC theorem schemes and related results. No claim is laid to the most expeditious route possible. Indeed IPC is complete: see [3] for relevant exercises and [5] for a proof; this means that many of our results succumb to elementary semantic confirmation of the truth-table variety. We deliberately proceed along syntactic lines, not least because this approach allows us to bring out the importance of the Peirce axiom scheme. Some of our results do not bear directly on our final theorem, but they are included for their independent interest.  

\medbreak 

Our first step in this direction is as follows. 

\medbreak 

\begin{theorem} \label{double}
If $Z$ is an IPC formula then $QQZ \vdash Z$ precisely when $Q \vdash Z$. 
\end{theorem}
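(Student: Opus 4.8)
The statement is a biconditional, so the plan is to handle the two implications separately; I expect essentially all the content to sit in the implication $Q \vdash Z \Rightarrow QQZ \vdash Z$, where the Peirce scheme should do the decisive work.

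Take first the implication $QQZ \vdash Z \Rightarrow Q \vdash Z$. Here the idea is to interpose $QQZ$ between $Q$ and $Z$. Since $QQZ = QZ \supset Q = (Z \supset Q) \supset Q$, the instance of $({\rm IPC}_1)$ with $X = Q$ and $Y = QZ$ reads $Q \supset (QZ \supset Q)$, that is $\vdash Q \supset QQZ$; hence $Q \vdash QQZ$. Chaining this with the hypothesis $QQZ \vdash Z$ by transitivity of deducibility (a cut) gives $Q \vdash Z$. This direction needs only $({\rm IPC}_1)$ and cut, with no recourse to Peirce.

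For the reverse implication, assume $Q \vdash Z$, equivalently $\vdash Q \supset Z$ by DT, and try to derive $Z$ from the single premise $QQZ = (Z \supset Q) \supset Q$. The obstacle is that $QQZ$ is not literally a Peirce antecedent for the conclusion $Z$: Peirce would want $(Z \supset Q) \supset Z$, not $(Z \supset Q) \supset Q$. The key move is therefore to combine the premise with the theorem $Q \supset Z$: applying HS to $(Z \supset Q) \supset Q$ and $Q \supset Z$ collapses the premise to $(Z \supset Q) \supset Z$, which is exactly the antecedent of the Peirce instance $[(Z \supset Q) \supset Z] \supset Z$ (taking $X = Z$ and $Y = Q$). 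A single application of MP then yields $Z$, so $QQZ \vdash Z$.

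I expect the only genuine subtlety to be spotting this HS-then-Peirce reduction: once the premise is rewritten as $(Z \supset Q) \supset Z$ the Peirce scheme finishes at once, and the conspicuous role of Peirce here is consistent with the paper's stated emphasis on that axiom. Everything else---the $({\rm IPC}_1)$ instance, the cut, and the two MP steps---is routine.
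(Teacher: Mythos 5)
Your proposal is correct and follows essentially the same route as the paper: the easy direction via the $({\rm IPC}_1)$ instance $Q \supset (QZ \supset Q)$ and transitivity of $\vdash$, and the substantive direction by using HS to collapse the premise $(Z \supset Q) \supset Q$ with $Q \supset Z$ into $(Z \supset Q) \supset Z$, then finishing with the Peirce instance $[(Z \supset Q) \supset Z] \supset Z$ and MP. The paper's proof is word-for-word this argument, merely written in the $QZ$ notation.
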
 

\begin{proof} 
In the one direction, an instance of ${\rm IPC}_1$ yields $Q \vdash QZ \supset Q = QQZ$ so that if $QQZ \vdash Z$ then $Q \vdash Z$ follows. In the opposite direction, let $Q \vdash Z$ so that $\vdash Q \supset Z$: the assumption $QZ \supset Q$ yields $QZ \supset Z$ (by HS) and then the instance $(QZ \supset Z) \supset Z$ of the Peirce axiom scheme yields $Z$ (by MP); accordingly, if $Q \vdash Z$ then $QQZ \vdash Z$. 
\end{proof} 

\medbreak 

Recall from Theorem \ref{Robbin} part (3) that $Z \vdash QQZ$ in any case; it follows that $QQZ \equiv Z$ precisely when $Q \vdash Z$. 

\begin{theorem} \label{extra}
Each of the following is an IPC theorem scheme: \par
{\rm (1)} $Q(X \supset Y) \supset QQX$ \par 
{\rm (2)} $Q(X \supset Y) \supset QY.$
\end{theorem}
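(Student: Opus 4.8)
The plan is to prove each part using the Deduction Theorem together with the syntactic machinery already assembled, especially Hypothetical Syllogism and the schemes listed in Theorem \ref{Robbin}. Recall the notation $Q(X \supset Y) = (X \supset Y) \supset Q$, $QQX = (X \supset Q) \supset Q$, and $QY = Y \supset Q$. Both parts assert that from a proof that the conditional $X \supset Y$ is ``false'' (i.e. implies $Q$), one can extract information about $X$ and about $Y$ separately. The natural strategy for each is to assume $Q(X \supset Y)$ as a hypothesis and derive the consequent, discharging the assumption via DT at the end.

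For part (2), the cleaner of the two, I would first establish the auxiliary scheme $Y \supset (X \supset Y)$, which is just an instance of ${\rm IPC}_1$. Applying Theorem \ref{Robbin}(2) with this instance gives $(Y \supset (X \supset Y)) \supset (Q(X \supset Y) \supset QY)$, and since the antecedent $Y \supset (X \supset Y)$ is a theorem, MP yields $Q(X \supset Y) \supset QY$ directly. This is the slickest route: part (2) follows from monotonicity of $Q(\bullet)$ (that is, Theorem \ref{Robbin}(2)) applied to the trivial implication $Y \vdash (X \supset Y)$.

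For part (1), I would similarly exploit Theorem \ref{Robbin}(2), but now I need an implication that has $X \supset Y$ on the ``inner'' side so that $Q(X \supset Y)$ appears after applying the contravariant operation. The key observation is that $QX \vdash (X \supset Y)$: indeed, from $X \supset Q$ one would want $X \supset Y$, but this requires moving through $Q$. A more reliable path is to find a theorem of the form $(X \supset Y) \supset \text{something involving } QX$, or else to build $Q(X \supset Y) \supset QQX$ by assuming $Q(X \supset Y)$ and $QX$ and deriving a contradiction-substitute, namely $Q$. Concretely, under the assumptions $Q(X \supset Y)$ and $QX = X \supset Q$, I expect to be able to derive $X \supset Y$ (using that $X \vdash Q$ together with $Q \vdash (Y \supset Q) \supset \dots$, or an ex-falso-style scheme available for $Q$), then apply $Q(X \supset Y)$ to obtain $Q$, and finally discharge both assumptions by DT to reach $Q(X \supset Y) \supset (QX \supset Q) = Q(X \supset Y) \supset QQX$.

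The main obstacle will be constructing $X \supset Y$ from $X \supset Q$ inside the proof of part (1): since $Q$ is an arbitrary fixed formula with no special relationship to $Y$, I cannot simply assert $Q \vdash Y$. I anticipate needing an intermediate ex-falso-type scheme, perhaps $QX \supset (X \supset Y)$ (derivable because from $X$ and $X \supset Q$ one gets $Q$, and one then needs $Q \vdash Y$-free reasoning to reach $Y$); if that fails, the fallback is to avoid $X \supset Y$ entirely and instead route through Theorem \ref{Robbin}(7), which states $QX \supset QQ(X \supset Y)$ and already packages the Peirce-dependent interaction between $X$ and $X \supset Y$. Contraposing (via Theorem \ref{Robbin}(2)) the relationship between $X \supset Y$ and $X$ should then deliver $Q(X \supset Y) \supset QQX$, and I would verify carefully that the direction of each implication matches, since the contravariance of $Q(\bullet)$ makes sign errors the likeliest pitfall.
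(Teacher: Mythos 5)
Your proposal is sound in substance, but its two halves fare differently against the paper's own (deliberately terse) proof, which merely hints that parts (2), (3) and (4) of Theorem \ref{Robbin} ``mediate'' between parts (7), (5) of Theorem \ref{Robbin} and parts (1), (2) of the present theorem. For part (2) your route is genuinely different and in fact shorter: you apply the contravariance scheme (Theorem \ref{Robbin} part (2)) to the ${\rm IPC}_1$ instance $Y \supset (X \supset Y)$ and detach by MP, obtaining $Q(X \supset Y) \supset QY$ in one stroke. The paper's intended route instead contraposes part (5), namely $QQY \supset QQ(X \supset Y)$, via part (2) to get $QQQ(X \supset Y) \supset QQQY$, and then needs part (3) (to pass from $Q(X \supset Y)$ to $QQQ(X \supset Y)$) and part (4) (to pass from $QQQY$ down to $QY$). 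Your argument buys brevity and bypasses that double-negation bookkeeping entirely.

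For part (1), your primary idea fails for exactly the reason you feared: $QX \supset (X \supset Y)$ is not an IPC theorem scheme, because IPC has no ex falso for the arbitrary fixed formula $Q$ (take $X$ and $Q$ true and $Y$ false: the scheme evaluates to false, and soundness for two-valued semantics rules out a derivation). Your fallback, however, is precisely the paper's intended argument, and it works; you only left the final bookkeeping undone. Spelled out: substituting into Theorem \ref{Robbin} part (2) gives $(QX \supset QQ(X \supset Y)) \supset (QQQ(X \supset Y) \supset QQX)$, so MP with part (7) yields $QQQ(X \supset Y) \supset QQX$; the triple-$Q$ antecedent is bridged by the part (3) instance $Q(X \supset Y) \supset QQQ(X \supset Y)$, and HS then delivers $Q(X \supset Y) \supset QQX$. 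So on part (1) you and the paper coincide --- contrapose part (7) via part (2), then clean up with part (3) --- and the ``direction check'' you flagged is exactly the point where part (3) must enter.
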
 

\begin{proof} 
Our offering direct arguments would sabotage the implied exercise in Theorem \ref{Robbin}. We merely note that parts (2), (3) and (4) of Theorem \ref{Robbin} mediate between parts (7)(5) of Theorem \ref{Robbin} and parts (1)(2) of the present theorem. \end{proof} 

\medbreak 

The following equivalence is a partial version of the `law of contraposition'. 

\medbreak 

\begin{theorem} \label{contra} 
$QQ (X \supset Y) \equiv QY \supset QX.$
\end{theorem}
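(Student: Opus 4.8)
The plan is to prove the two halves of the equivalence $QQ(X\supset Y)\equiv QY\supset QX$ separately, in each case first unfolding the abbreviations ($QZ = Z\supset Q$, so that $QQ(X\supset Y) = ((X\supset Y)\supset Q)\supset Q$ while $QY\supset QX = (Y\supset Q)\supset(X\supset Q)$) and then reasoning entirely with MP and the Deduction Theorem. Since both target formulas are themselves conditionals, DT will let me shift their antecedents to the left of $\vdash$ and chase a derivation of the single conclusion $Q$.

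For the direction $QY\supset QX\vdash QQ(X\supset Y)$ I would assume $QY\supset QX$ together with $Q(X\supset Y)$ and derive $Q$; discharging the two assumptions by DT then yields $QQ(X\supset Y)$ under the single hypothesis $QY\supset QX$. The derivation of $Q$ is short and leans on Theorem \ref{extra}: part (2) gives $Q(X\supset Y)\supset QY$, so MP produces $QY$, and the hypothesis $QY\supset QX$ then yields $QX$; meanwhile part (1) gives $Q(X\supset Y)\supset QQX$, so MP produces $QQX = QX\supset Q$, and a final MP against $QX$ delivers $Q$.

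For the converse $QQ(X\supset Y)\vdash QY\supset QX$ I would peel off antecedents with three nested applications of DT, assuming in turn $QQ(X\supset Y)$, then $QY$, then $X$, the goal now being $Q$. The key move is to obtain $Q(X\supset Y) = (X\supset Y)\supset Q$ as an intermediate: under a further hypothesis $X\supset Y$, MP with $X$ gives $Y$ and MP with $QY = Y\supset Q$ gives $Q$, so DT discharges $X\supset Y$ to produce exactly $Q(X\supset Y)$; one more MP against $QQ(X\supset Y) = Q(X\supset Y)\supset Q$ then yields $Q$, and unwinding the three DTs gives $QY\supset QX$. An alternative, more abstract route is available here: Theorem \ref{Robbin}(2) gives $\vdash(X\supset Y)\supset(QY\supset QX)$, applying part (2) twice shows that the double operation $QQ$ is monotone, whence $QQ(X\supset Y)\vdash QQ(QY\supset QX)$, and since $Q\vdash QY\supset QX$ the remark following Theorem \ref{double} collapses $QQ(QY\supset QX)$ back to $QY\supset QX$.

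I expect the only real friction to be bookkeeping rather than ideas: in the converse direction one must correctly single out $Q(X\supset Y)$ as the useful intermediate and keep the nested hypotheses $\{QQ(X\supset Y),QY,X,X\supset Y\}$ straight through the cascade of DT discharges. The forward direction is essentially immediate once Theorem \ref{extra} is invoked; the mild subtlety there is noticing that both $QX$ and $QQX$ can be extracted from the single assumption $Q(X\supset Y)$, after which they annihilate to give $Q$.
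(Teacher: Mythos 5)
Your proof is correct, and half of it is exactly the paper's argument: for $QY \supset QX \vdash QQ(X \supset Y)$ you assume the same two hypotheses, invoke parts (1) and (2) of Theorem \ref{extra}, run the same MP chain down to $Q$, and discharge with DT, just as the paper does. In the harder direction $QQ(X \supset Y) \vdash QY \supset QX$ your skeleton again matches the paper's (assume $QQ(X \supset Y)$, $QY$, $X$; funnel everything through the intermediate $Q(X \supset Y)$; finish by MP and DT), but you manufacture that intermediate differently. The paper obtains $Q(X \supset Y)$ from $X$ via parts (3) and (6) of Theorem \ref{Robbin}: deduce $QQX$, then apply $QQX \supset [QY \supset Q(X \supset Y)]$ and MP twice. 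You instead build it from first principles with a nested Deduction Theorem: assume $X \supset Y$, get $Y$ and then $Q$ by MP against $X$ and $QY$, and discharge $X \supset Y$. Your route is more self-contained --- it uses only MP and DT, whereas the paper leans on two of the Robbin exercise schemes whose proofs are omitted --- at the cost of one extra layer of hypothesis bookkeeping (and note that only two outer DT discharges are needed, not three, unless you want the full theorem form). Your alternative sketch is also sound: $Q \vdash QX$ and hence $Q \vdash QY \supset QX$ by two instances of ${\rm IPC}_1$, so the monotonicity of $QQ$ from part (2) of Theorem \ref{Robbin} together with the collapse $QQW \equiv W$ when $Q \vdash W$ (the remark after Theorem \ref{double}) gives the result; that collapse technique is precisely how the paper argues elsewhere, e.g.\ in Theorem \ref{closed}.
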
 

\begin{proof} 
$QQ (X \supset Y) \vdash QY \supset QX$: To see this, assume $QQ(X \supset Y)$, $QY$ and $X$. In turn, we deduce $QQX$ (by part (3) of Theorem \ref{Robbin}), $QY \supset Q(X \supset Y)$ (by MP and part (6) of Theorem \ref{Robbin}), $Q(X \supset Y)$ (by MP) and $Q$ (by MP). Thus 
$$QQ (X \supset Y), QY, X \vdash Q$$
and so two applications of DT yield 
$$QQ(X \supset Y) \vdash QY \supset QX.$$
\noindent
$QY \supset QX \vdash QQ(X \supset Y)$: To see this, assume $QY \supset QX$ and $Q(X \supset Y)$. We deduce in turn $QQX (= QX \supset Q)$ (by part (1) of Theorem \ref{extra}), $QY$ (by part (2) of Theorem \ref{extra}), $QX$ (by MP) and $Q$ (by MP). Thus 
$$QY \supset QX, Q(X \supset Y) \vdash Q$$
and so an application of DT yields 
$$QY \supset QX \vdash QQ(X \supset Y).$$
\end{proof} 

\medbreak 

The next result is a partial version of `denial of the antecedent'. 

\medbreak 

\begin{theorem} \label{neg}
If $X$ and $B$ are IPC formulas then:\par 
(1) $QX \vdash X \supset QB$; \par 
(2) $X \vdash QX \supset QB.$
\end{theorem}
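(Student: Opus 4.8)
The plan is to reduce both parts to a single deduction and then invoke the Deduction Theorem to discharge the hypotheses in the two possible orders. The central observation is that the hypotheses $QX$ and $X$ together entail $Q$ by a single application of modus ponens, since $QX$ is by definition $X \supset Q$. Once $Q$ is in hand, the first axiom scheme ${\rm IPC}_1$ supplies $Q \supset (B \supset Q)$, that is $Q \supset QB$, so that a further application of MP delivers $QB$ for any formula $B$ whatsoever. In symbols, I would first establish the common core
$$QX, X \vdash QB.$$

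From this both parts follow immediately by DT, the two parts differing only in the order in which the (unordered) hypotheses are discharged. For part (1), DT discharges $X$ to give $QX \vdash X \supset QB$; for part (2), DT discharges $QX$ instead to give $X \vdash QX \supset QB$. No appeal to the Peirce scheme or to the earlier theorems of the section is required; the argument rests only on MP, DT and ${\rm IPC}_1$.

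I expect no serious obstacle here. The only point warranting a moment's care is the recognition that ${\rm IPC}_1$ allows $Q$ to imply $QB$ for an \emph{arbitrary} $B$, which is precisely the sense in which this is a `denial of the antecedent': having produced $Q$ (the designated formula $Q$ playing the role of falsum), one may conclude any formula of the shape $QB$. Since the whole matter turns on this single instance of ${\rm IPC}_1$ followed by two applications of modus ponens, the proof should be short and essentially computational.
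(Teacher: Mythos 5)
Your proposal is correct and is essentially identical to the paper's own proof: both establish the common deduction $QX, X \vdash QB$ via modus ponens followed by the instance $Q \supset (B \supset Q)$ of ${\rm IPC}_1$, and then obtain (1) and (2) by discharging the hypotheses with separate applications of the Deduction Theorem.
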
 

\begin{proof} 
Assume $QX = X \supset Q$ and $X$: by MP we deduce $Q$; by the instance $Q \supset (B \supset Q)$ of ${\rm IPC}_1$ and MP we deduce $B \supset Q = QB$. This proves the deduction 
$$QX, X \vdash QB$$
from which (1) and (2) follow by separate applications of DT. 
\end{proof} 

\medbreak 

\medbreak 

Of course, (1) improves (2) to the statement $QQX \vdash QX \supset QB$. 

\medbreak 

Additional evidence for the action of $Q$ as a partial negation is provided by the next result. 

\medbreak 

\begin{theorem} 
If $A$ and $B$ are IPC formulas then: \par
(1) $QA \supset B, A \supset B \vdash QQB;$ \par 
(2) $A \supset QB, A \supset B \vdash QA.$
\end{theorem}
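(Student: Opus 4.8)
The plan is to treat both parts uniformly by applying the Deduction Theorem (DT) to strip the outermost conditional from the target, thereby reducing each claim to a derivation of the single formula $Q$ from augmented hypotheses; what then remains is a short chain of modus ponens (MP) steps with at most one appeal to Hypothetical Syllogism (HS).

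For part (1), I would use the notational identity $QQB = QB \supset Q$, so that by DT it suffices to establish the deduction $QA \supset B, A \supset B, QB \vdash Q$. Here I would first combine $A \supset B$ with $QB = B \supset Q$ by HS to obtain $A \supset Q$, which is precisely $QA$; feeding this into the hypothesis $QA \supset B$ yields $B$ by MP; and applying $QB = B \supset Q$ to $B$ yields $Q$. A single application of DT then restores the conditional $QB \supset Q = QQB$, as required.

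For part (2), I would likewise use $QA = A \supset Q$, so that by DT it suffices to establish $A \supset QB, A \supset B, A \vdash Q$. Under the extra hypothesis $A$, one MP against $A \supset B$ produces $B$ and one MP against $A \supset QB$ produces $QB = B \supset Q$; a final MP then delivers $Q$. DT restores $A \supset Q = QA$.

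I do not anticipate a genuine obstacle here: both derivations are routine once DT is invoked. The only point demanding care is the bookkeeping with the $Q(\bullet)$ notation --- recognising that the conditional returned by DT is literally $QQB$ in part (1) and literally $QA$ in part (2) --- so that no further rewriting is needed to match the stated conclusions. It is worth noting in passing that, since DT and HS rest only on the schemes ${\rm IPC}_1$ and ${\rm IPC}_2$, neither argument invokes the Peirce scheme.
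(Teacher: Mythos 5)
Your proof is correct and is essentially identical to the paper's: both parts proceed by assuming the antecedent of the target conditional ($QB$ in part (1), $A$ in part (2)), deriving $Q$ via HS and MP in exactly the same order, and then discharging with DT. Your closing observation that the Peirce scheme plays no role here is accurate and consistent with the paper's framing of DT and HS as consequences of ${\rm IPC}_1$ and ${\rm IPC}_2$ alone.
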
 

\begin{proof} 
For (1) assume $QA \supset B, A \supset B$ and $B \supset Q$: there follow $A \supset Q = QA$ (by HS), $B$ (by MP), $Q$ (by MP); now apply DT. For (2) assume $A \supset QB, A \supset B$ and $A$: two separate applications of MP yield $QB = B \supset Q$ and $B$ whence a third yields $Q$; again apply DT. 
\end{proof} 

\medbreak 

We introduce disjunction into IPC as an abbreviation: explicitly, when $X$ and $Y$ are IPC formulas we define 
$$X \vee Y := (X \supset Y) \supset Y.$$
This derived connective has the properties expected of it. Among the most fundamental are those expressed in the following result. 

\medbreak 

\begin{theorem} \label{sup}
Let $X, Y, Z$ be IPC formulas. Then: \par 
(1) $X \vdash X \vee Y$ and $Y \vdash X \vee Y$; \par 
(2) if $X \vdash Z$ and $Y \vdash Z$ then $X \vee Y \vdash Z$. 
\end{theorem}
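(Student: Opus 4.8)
The plan is to prove both parts directly from the definition $X \vee Y := (X \supset Y) \supset Y$, exploiting the Deduction Theorem and the Peirce axiom scheme throughout.

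For part (1), I would first establish $X \vdash X \vee Y$. Assuming $X$ together with the hypothesis $X \supset Y$, modus ponens yields $Y$; hence $X, X \supset Y \vdash Y$, and one application of DT gives $X \vdash (X \supset Y) \supset Y = X \vee Y$. The claim $Y \vdash X \vee Y$ is even more immediate: from $Y$ the instance $Y \supset ((X \supset Y) \supset Y)$ of ${\rm IPC}_1$ and MP deliver $(X \supset Y) \supset Y = X \vee Y$ directly. Neither of these should present any difficulty.

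For part (2), I would assume $X \vdash Z$ and $Y \vdash Z$, so that $\vdash X \supset Z$ and $\vdash Y \supset Z$, and aim to show $X \vee Y \vdash Z$, i.e.\ $(X \supset Y) \supset Y \vdash Z$. The natural route is to derive $Z$ from the hypothesis $(X \supset Y) \supset Y$. Since $\vdash X \supset Z$, I would like to manufacture the antecedent $X \supset Y$ so as to conclude $Y$ and thence $Z$ via $\vdash Y \supset Z$; but $X \supset Y$ is not available, so a more delicate argument is needed. The key observation is that the Peirce scheme $((Z \supset \text{something}) \supset Z) \supset Z$ is what converts the disjunction-like form into the target $Z$. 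Concretely, from $\vdash X \supset Z$ and the assumption $(X \supset Y) \supset Y$ together with the available $\vdash Y \supset Z$, I expect to assemble, by hypothetical syllogism, a deduction of the form $(X \supset Z) \vdash \ldots$ culminating in $(Z \supset Y) \supset Z$ or a Peirce-shaped formula in $Z$, after which the relevant Peirce instance and MP produce $Z$.

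The main obstacle will be this last step: reorganizing the hypotheses $(X \supset Y) \supset Y$, $X \supset Z$ and $Y \supset Z$ so that the Peirce axiom scheme applies with $Z$ as its principal variable. The cleanest strategy is probably to show $(X \supset Y) \supset Y, \; Z \supset X \vdash Z$ is not quite what we want; rather, I would work to derive $X \supset Y$ under the extra assumption of a suitable formula, or directly build the formula $(Z \supset Y) \supset Z$: note $\vdash X \supset Z$ lets us weaken any derivation of $X$ to one of $Z$. I anticipate that the correct assembly assumes $(X \supset Y) \supset Y$ and aims to derive $Z$ by first obtaining $(X \supset Y)$ from $(Z \supset Y)$-type reasoning (using $X \supset Z$ contravariantly) and then invoking Peirce on $Z$; isolating exactly which Peirce instance discharges the residual hypothesis is the delicate point, and completeness of IPC (noted in the excerpt) assures us the deduction exists even if the precise syntactic witness requires care to exhibit.
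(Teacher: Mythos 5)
Part (1) of your proposal is correct and is exactly the paper's argument: MP plus DT for $X \vdash X \vee Y$, and an instance of ${\rm IPC}_1$ plus MP for $Y \vdash X \vee Y$.

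Part (2), however, has a genuine gap: the step you defer as ``the delicate point'' --- identifying the Peirce instance and discharging the residual hypothesis --- is the entire content of the theorem, and you never exhibit it. Falling back on completeness does not close the gap either: to invoke completeness you would still have to verify semantically that the relevant formula is a tautology (which you do not do), and it runs against the paper's explicitly syntactic program. The missing assembly is in fact short, and your own sketch (``using $X \supset Z$ contravariantly, then invoking Peirce on $Z$'') is pointing at it. Under the hypotheses $\vdash X \supset Z$ and $\vdash Y \supset Z$, assume $(X \supset Y) \supset Y$ and, additionally, $Z \supset Y$. Then HS on $X \supset Z$ and $Z \supset Y$ gives $X \supset Y$; MP with $(X \supset Y) \supset Y$ gives $Y$; MP with $Y \supset Z$ gives $Z$. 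Now DT discharges the extra assumption, yielding $(X \supset Y) \supset Y \vdash (Z \supset Y) \supset Z$, and the Peirce instance $((Z \supset Y) \supset Z) \supset Z$ together with MP delivers $Z$; a final DT gives $X \vee Y \vdash Z$. The paper reaches the same conclusion slightly differently: it quotes part (7) of Theorem \ref{Robbin} (with $Q := Z$), which packages precisely this Peirce argument as $X \supset Z \vdash ((X \supset Y) \supset Z) \supset Z$, and combines it with the HS deduction $Y \supset Z, (X \supset Y) \supset Y \vdash (X \supset Y) \supset Z$; your route, once completed as above, simply inlines the proof of that lemma rather than citing it.
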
 

\begin{proof} 
Part (1) is Theorem 2 in [4]: its first assertion follows from the assumptions $X$ and $X \supset Y$ by MP and then DT; its second assertion follows by MP from an instance of ${\rm IPC}_1$. Part (2) is Theorem 3 in [4] and is more substantial: part (7) of Theorem \ref{Robbin} gives 
$$X \supset Z \vdash ((X \supset Y) \supset Z) \supset Z$$
while HS gives 
$$Y \supset Z, (X \supset Y) \supset Y \vdash (X \supset Y) \supset Z$$
whence 
$$X \supset Z, Y \supset Z, (X \supset Y) \supset Y \vdash Z$$
by MP and therefore DT yields 
$$X \supset Z, Y \supset Z \vdash (X \vee Y) \supset Z.$$
\end{proof} 

\medbreak 

We should point out here the r\^ole played by the Peirce axiom scheme, which enters in the form of Theorem \ref{Robbin} part (7); see [4] for more on this. Observe that as a consequence of this theorem, $\vee$ is symmetric in the sense $Y \vee X \equiv X \vee Y$. 

\medbreak 

\begin{theorem} \label{dM}
If $X$ and $Y$ are IPC formulas then 
$$QX \vee QY \equiv Y \supset (X \supset Q).$$
\end{theorem}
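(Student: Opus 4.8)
The plan is to unwind both sides into their primitive implicational form and then verify the two entailments $QX \vee QY \vdash Y \supset (X \supset Q)$ and $Y \supset (X \supset Q) \vdash QX \vee QY$ separately. By the definition of the derived disjunction, $QX \vee QY = (QX \supset QY) \supset QY$, while the right-hand side is just $Y \supset QX$ once we write $X \supset Q = QX$. So what must be shown is $(QX \supset QY) \supset QY \equiv Y \supset QX$. As a sanity check, reading $Q$ as falsum collapses both sides to the classical $\neg X \vee \neg Y$, so the equivalence ought to hold; the work is to make this syntactic.

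For the forward direction I would invoke disjunction elimination, i.e. Theorem \ref{sup}(2) applied with $QX$ and $QY$ in the roles of the two disjuncts: it suffices to prove $QX \vdash Y \supset QX$ and $QY \vdash Y \supset QX$, for then $QX \vee QY \vdash Y \supset QX$ follows at once. The first is immediate from the instance $QX \supset (Y \supset QX)$ of ${\rm IPC}_1$ together with MP. The second is the one mildly surprising step, since $QY$ carries no obvious information about $X$: assuming $QY = Y \supset Q$ together with $Y$ yields $Q$ by MP, and then the instance $Q \supset (X \supset Q)$ of ${\rm IPC}_1$ gives $X \supset Q = QX$; discharging $Y$ by DT delivers $QY \vdash Y \supset QX$.

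For the reverse direction I would argue directly, building the nested implication by repeated use of DT. Assume $Y \supset QX$ (this is the hypothesis $Y \supset (X \supset Q)$), assume $QX \supset QY$, and assume $Y$. Three applications of MP give in turn $QX$, then $QY = Y \supset Q$, and then $Q$. Hence $Y \supset QX, QX \supset QY, Y \vdash Q$; one application of DT yields $Y \supset QX, QX \supset QY \vdash Y \supset Q = QY$, and a second yields $Y \supset QX \vdash (QX \supset QY) \supset QY = QX \vee QY$, as required.

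The argument is largely mechanical once the definitions are expanded; the only real subtlety is the second half of the forward direction, where one must notice that $QY$ and $Y$ together collapse to $Q$, after which $QX$ follows trivially by weakening. It is also worth flagging where Peirce hides: the forward direction leans on Theorem \ref{sup}(2), whose proof rests on Theorem \ref{Robbin}(7) and hence on the Peirce scheme, whereas the reverse direction uses only ${\rm IPC}_1$, ${\rm IPC}_2$ and DT.
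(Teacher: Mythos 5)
Your proof is correct and follows essentially the same route as the paper's: the forward direction invokes Theorem \ref{sup}(2) with exactly the same two sub-deductions ($QX \vdash Y \supset QX$ by ${\rm IPC}_1$, and $QY \vdash Y \supset QX$ via $QY, Y \vdash Q$ and ${\rm IPC}_1$), and the reverse direction is the same three applications of MP followed by two of DT. Your closing remark about where the Peirce scheme hides also matches the paper's own observation that Theorem \ref{sup} depends on Theorem \ref{Robbin}(7).
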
 

\begin{proof} 
$QX \vee QY \equiv Y \supset (X \supset Q)$: On the one hand, $QX \vdash Y \supset QX$ follows from an instance of ${\rm IPC}_1$; on the other hand, $QY, Y \vdash Q$ so that $QY, Y \vdash QX$ by ${\rm IPC}_1$ and therefore $QY \vdash Y \supset QX$. An application of Theorem \ref{sup} part (2) ends the argument. \par 
$Y \supset (X \supset Q) \vdash QX \vee QY$: Assume $Y \supset (X \supset Q), (X \supset Q) \supset (Y \supset Q)$ and $Y$; three applications of MP yield $Q$ whereupon two successive applications of DT yield 
$$Y \supset (X \supset Q), QX \supset QY \vdash QY$$
and then 
$$Y \supset (X \supset Q) \vdash (QX \supset QY) \supset QY = QX \vee QY.$$

\end{proof} 

\medbreak 

The preceding partial `de Morgan law'  is an instance of Theorem 6 in [5]. 

\medbreak 

\begin{theorem} \label{closed}
If $X$ and $B$ are IPC formulas then there exist IPC formulas $C$ and $D$ such that: \par
(1) $X \supset QB \equiv QC;$ \par 
(2) $X \vee QB \equiv QD$. 
\end{theorem}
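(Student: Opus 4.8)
The plan is to exploit a single observation that settles both parts at once: the remark following Theorem \ref{double} records that $QQZ \equiv Z$ whenever $Q \vdash Z$. Consequently, for any formula $Z$ with $Q \vdash Z$, setting $C := QZ$ gives
$$QC = C \supset Q = (Z \supset Q) \supset Q = QQZ \equiv Z,$$
so that $Z$ is (equivalent to) a $Q$-negation. It therefore suffices, in each part, to verify that $Q$ entails the relevant formula; the witness is then simply the $Q$-negation of that formula.

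For part (1), I would check that $Q \vdash X \supset QB$. This is immediate: from the hypothesis $Q$ an instance of ${\rm IPC}_1$ and MP yield $B \supset Q = QB$, and a second instance of ${\rm IPC}_1$ then yields $X \supset QB$. (Equivalently, $Q, X, B \vdash Q$ holds trivially, so two applications of DT give $Q \vdash X \supset (B \supset Q)$.) Taking $C := Q(X \supset QB)$, the displayed equivalence above gives $X \supset QB \equiv QC$, as required.

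For part (2), I would check that $Q \vdash X \vee QB$. Again $Q \vdash QB$ by ${\rm IPC}_1$, while Theorem \ref{sup} part (1) supplies $QB \vdash X \vee QB$; chaining these entailments yields $Q \vdash X \vee QB$. Taking $D := Q(X \vee QB)$, the same observation gives $X \vee QB \equiv QD$.

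The whole content of the result thus lies in the choice of witnesses, and the only real step is recognizing that $C$ and $D$ may be taken to be the $Q$-negations of the very formulas in question, which reduces everything to the entailments $Q \vdash X \supset QB$ and $Q \vdash X \vee QB$. I do not expect any genuine obstacle here, since both entailments are routine consequences of ${\rm IPC}_1$ and Theorem \ref{sup}. The one point worth flagging is that if one insisted on witnesses $C, D$ not built from $Q$, that would be the hard part; but since the statement asks only for existence, the approach above disposes of it directly.
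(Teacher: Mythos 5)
Your proof is correct and takes essentially the same approach as the paper: both verify $Q \vdash X \supset QB$ via instances of ${\rm IPC}_1$ and then invoke Theorem \ref{double} (the observation that $Q \vdash Z$ forces $Z \equiv QQZ$) to take $C = Q(X \supset QB)$ and $D = Q(X \vee QB)$. The only cosmetic difference is in part (2), where the paper notes that $X \vee QB = (X \supset QB) \supset QB$ is itself of the form handled by part (1), while you instead verify $Q \vdash X \vee QB$ directly using Theorem \ref{sup} part (1); both routes produce the same witness.
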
 

\begin{proof} 
Part (1): As regards the conditional, $Q \vdash B \supset Q$ and $QB \vdash X \supset QB$ by instances of ${\rm IPC}_1$ so that $Q \vdash X \supset QB$; now Theorem \ref{double} tells us that $X \supset QB \equiv QC$ with $C = Q(X \supset QB).$
Part (2) follows as a consequence: as $X \vee QB = (X \supset QB) \supset QB$ we may take $D = Q(X \vee QB).$ 
\end{proof} 

\medbreak 

Notice that if $Z$ denotes either $X \supset QB$ or $X \vee QB$ then $QQZ \equiv Z$  by Theorem \ref{double}. 

\medbreak 

\begin{theorem} \label{D}
If $X, B, Z$ are IPC theorems and $Y = QB$ then 
$$Q(Q(X \vee Y) \vee QZ) \vdash QQX \vee Q(QY \vee QZ).$$
\end{theorem}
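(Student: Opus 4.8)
The plan is to exploit the hypotheses to collapse the right-hand side to a theorem, after which the asserted deduction is immediate. Since $X$ is an IPC theorem, part (3) of Theorem~\ref{Robbin} together with MP yields $\vdash QQX$. Theorem~\ref{sup}(1) will then upgrade this to $QQX \vdash QQX \vee Q(QY \vee QZ)$, so that the right-hand side $QQX \vee Q(QY \vee QZ)$ is itself a theorem. This is the whole of the work: once the right-hand side is known to be a theorem, the deduction $Q(Q(X \vee Y) \vee QZ) \vdash QQX \vee Q(QY \vee QZ)$ holds for the trivial reason that a theorem is a consequence of any collection of premises (one simply derives it while ignoring the hypothesis on the left). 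It is worth noting that this two-line route invokes only $\vdash X$; the remaining hypotheses $\vdash B$ and $\vdash Z$ are not strictly needed for it.

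Structurally I read this as recording the degenerate instance of the distributive law in the skeleton in which one of the joinands, namely $QQX$ (forced up to the top $\mathbf{1}$ by the theorem $X$), already equals $\mathbf{1}$; this is exactly why the resistance that distributivity offers in general is absent here. The hypotheses on $B$ and $Z$ fix the relevant instance: with $B$ a theorem one has $Y = QB \equiv Q$, and with $Z$ a theorem $QZ \equiv Q$, so that, unwinding the definition $A \vee B := (A \supset B) \supset B$, a short computation shows the left-hand side is likewise a theorem. That computation is not needed for the proof, but I would record it as a reassuring consistency check that both sides sit at the top.

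I expect no genuine obstacle here; the only trap is to mount a direct syntactic assault on the displayed inequality as though it were the full distributive law, which would illegitimately presuppose the very distributivity being constructed. The honest distributive inequality for unrestricted $X$, $B$, $Z$ is the hard part elsewhere, but under the present theorem hypotheses it dissolves, and the passage through Theorem~\ref{Robbin}(3) and Theorem~\ref{sup}(1) is all that the statement requires.
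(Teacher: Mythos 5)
Your derivation is formally sound for the statement as literally printed: from $\vdash X$ you get $\vdash QQX$ by Theorem \ref{Robbin}(3) and MP, then $\vdash QQX \vee Q(QY \vee QZ)$ by Theorem \ref{sup}(1), and a theorem is deducible from any premises. But the word ``theorems'' in the statement is evidently a misprint for ``formulas,'' and taking it at face value empties the theorem of its content. Two pieces of evidence: first, the paper's own proof never invokes theoremhood of $X$, $B$ or $Z$ --- the only hypothesis it uses is the \emph{form} condition $Y = QB$, which makes $X \vee Y$ and $Z \supset (X \vee Y)$ fixed up to equivalence by $QQ$ (Theorem \ref{closed} together with Theorem \ref{double}); second, and decisively, Theorem \ref{dist} applies Theorem \ref{D} to representatives of \emph{arbitrary} $x, y, z \in \LL^q$, which are not theorem classes in general (take $x = y = z = q$ with $Q$ a non-theorem). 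Under your reading, Theorem \ref{D} would be far too weak to deliver the inequality $(x \vee y) \wedge z \leqslant x \vee (y \wedge z)$ for all of $\LL^q$, and the paper's first proof of distributivity would collapse.

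So the genuine gap is not in your logic but in your diagnosis of where the work lies. You write that ``the honest distributive inequality for unrestricted $X$, $B$, $Z$ is the hard part elsewhere''; there is no elsewhere --- Theorem \ref{D} \emph{is} that hard part, prepared precisely so that Theorem \ref{dist} becomes a two-line application of it. The paper's proof is a genuine syntactic derivation valid for all formulas: Claim 1, $Q(X \vee Y) \supset QZ, Z \vdash X \vee Y$, obtained from contraposition (Theorem \ref{contra}) plus the $QQ$-fixedness just mentioned; Claim 2, $QX, QY, X \vee Y \vdash Q$, by denial of the antecedent (Theorem \ref{neg}); Claim 3, $QY \vee QZ, QX \vdash Q(X \vee Y) \vee QZ$, combining these; and then the conclusion follows by two applications of DT and the de Morgan equivalence of Theorem \ref{dM}. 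Your own observations --- that $\vdash B$ and $\vdash Z$ go unused, and that both sides collapse to ${\bf 1}$ --- were the warning signs that the hypothesis had been misread rather than that the theorem was trivial.
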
 

\begin{proof} 
 We begin the proof with a sequence of claims.\par
{\it Claim} 1:  $Q(X \vee Y) \supset QZ, Z \vdash X \vee Y.$ [Theorem \ref{contra} gives $Q(X \vee Y) \supset QZ \equiv QQ(Z \supset (X \vee Y))$ while $QQ(Z \supset (X \vee Y)) \equiv Z \supset (X \vee Y)$ by Theorem \ref{closed}; an application of MP concludes the argument.] \par 
{\it Claim} 2: $QX, QY, X \vee Y \vdash Q.$ [Theorem \ref{neg} yields $QX \vdash X \supset Y$; as $X \vee Y = (X \supset Y) \supset Y$ it follows that $QX, X \vee Y \vdash Y$ by MP; as $QY = Y \supset Q$ we conclude that $QX, QY, X \vee Y \vdash Q$ by an application of  MP.]\par 
{\it Claim} 3: $QY \vee QZ, QX \vdash Q(X \vee Y) \vee QZ.$ [Assume $QY \vee QZ, QX, Q(X \vee Y) \supset QZ$ and $Z$. We deduce $Z \supset QY$ (directly from $QY \vee QZ$ by Theorem \ref{dM}), $QY$ (by MP), $X \vee Y$ (by Claim 1) and $Q$ (by Claim 2). Thus 
$$QY \vee QZ, QX, Q(X \vee Y) \supset QZ, Z \vdash Q$$
and so DT yields 
$$QY \vee QZ, QX, Q(X \vee Y) \supset QZ \vdash Z \supset Q = QZ$$
whence 
$$QY \vee QZ, QX \vdash (Q(X \vee Y) \supset QZ) \supset QZ = Q(X \vee Y) \vee QZ$$
by a further application of DT.] \par 
We now complete the proof as follows. Assume $Q(Q(X \vee Y) \vee QZ), QY \vee QZ$ and $QX$. By Claim 3 we deduce $Q(X \vee Y) \vee QZ$ and then by MP we deduce $Q$. This establishes the deduction 
$$Q(Q(X \vee Y) \vee QZ), QY \vee QZ, QX \vdash Q$$ 
from which 
$$Q(Q(X \vee Y) \vee QZ), QY \vee QZ \vdash QX \supset Q$$
follows by DT and 
$$Q(Q(X \vee Y) \vee QZ) \vdash (QY \vee QZ) \supset (QX \supset Q)$$
follows likewise. Finally, Theorem \ref{dM} justifies the equivalence  
$$ (QY \vee QZ) \supset (QX \supset Q) \equiv QQX \vee Q(QY \vee QZ). $$
\end{proof} 

\medbreak 

As a special case, if also $X = QA$ then $QQX \equiv X$ and therefore
$$Q(Q(X \vee Y) \vee QZ) \vdash X \vee Q(QY \vee QZ).$$

\medbreak 

\section{Lindenbaum-Tarski and  Gr\"atzer-Schmidt}

\medbreak 

The Lindenbaum-Tarski algebra of the Implicational Propositional Calculus results from the identification of syntactically equivalent IPC formulas. Explicitly, recall that the set $L$ comprising all (well-formed) IPC formulas is equipped with an equivalence relation $\equiv$ defined by the rule that $X \equiv Y$ precisely when both $X \vdash Y$ and $Y \vdash X$. As a point of notation, we shall typically name $\equiv$-classes and their representatives by lower-case and upper-case versions of the same letter: thus, if $Z$ is an IPC formula then $z = [Z]$ is its $\equiv$-class; conversely, if $w$ is a $\equiv$-class then $W$ will denote an IPC formula that represents it. The Lindenbaum-Tarski algebra $\LL = L / \equiv$ is the set comprising all such equivalence classes.

\medbreak 

The conditional $\supset$ descends to define on $\LL$ an operation for which we use the same symbol: thus, we define
$$[X] \supset [Y] := [X \supset Y].$$
To see that this operation is well-defined, let $X_0 \equiv X$ and $Y \equiv Y_0$: from $X_0 \vdash X$ and $Y \vdash Y_0$ there follow $\vdash X_0 \supset X$ and $\vdash Y \supset Y_0$ whence two applications of HS yield the deduction $X \supset Y \vdash X_0 \supset Y_0$; likewise, $X_0 \supset Y_0 \vdash X \supset Y$. 

\medbreak 

This operation on $\LL$ has several algebraic properties of interest. Among them is the self-distributive law: if $x, y, z \in \LL$ then 
$$x \supset (y \supset z) = (x \supset y) \supset (x \supset z).$$
To see this, let $x = [X], y = [Y], z = [Z]$. On the one hand
$$X \supset (Y \supset Z) \vdash (X \supset Y) \supset (X \supset Z)$$
follows from an instance of ${\rm IPC}_2$. On the other hand, assume $(X \supset Y) \supset (X \supset Z), X$ and $Y$: in turn there follow $X \supset Y$ (by ${\rm IPC}_1$ and MP), $X \supset Z$ (by MP), $Z$ (by MP); thus DT yields $(X \supset Y) \supset (X \supset Z), X \vdash Y \supset Z$ and a final application of DT yields 
$$(X \supset Y) \supset (X \supset Z) \vdash X \supset (Y \supset Z).$$
Also of interest is a commutative law for stacked antecedents: 
$$x \supset (y \supset z) = y \supset (x \supset z).$$
To see this by symmetry, note that MP twice yields 
$$X \supset (Y \supset Z), Y, X \vdash Z$$
whence DT twice yields 
$$X \supset (Y \supset Z) \vdash Y \supset (X \supset Z).$$

\medbreak 

The Lindenbaum-Tarski algebra $\LL$ also carries a disjunctive operation $\vee$ well-defined by iterating the conditional: 
$$[X] \vee [Y] := ([X] \supset [Y]) \supset [Y]$$ 
so that of course 
$$[X] \vee [Y] = [X \vee Y].$$

\medbreak 

A partial order $\leqslant$ is well-defined on $\LL$ by the declaration 
$$[X] \leqslant [Y] \Leftrightarrow X \vdash Y.$$
The poset $(\LL, \leqslant)$ is plainly topped: its unit $\bf 1$ is precisely the $\equiv$-class comprising all IPC theorems; indeed, if $T$ is a theorem and $Z$ a formula then $Z \vdash T$ so that $[Z] \leqslant [T]$. More is true, as noted in [4]: the poset $(\LL, \leqslant)$ is actually a semilattice, pairwise suprema being given by the disjunctive operation $\vee$ introduced above, so that if $x, y \in \LL$ then 
$$\sup \{ x, y \} = x \vee y;$$
this is an immediate consequence of Theorem \ref{sup} and depends crucially on the Peirce axiom scheme. In short, $\LL$ is a topped join-semilattice. 
\bigbreak

Now, choose and fix an arbitrary element $q = [Q]$ of $\LL$. Consider the map 
$$(\bullet)^q : \LL \ra \LL : z \mapsto z^q := z \supset q$$
so that if $z = [Z]$ then $z^q = [QZ]$. In the special case that $Q$ is a theorem (so that $q = {\bf 1}$ is the unit) this map takes ${\bf 1}$ as its constant value; our interest lies largely in the complementary case. 

\medbreak 

The map $(\bullet)^q$ is order-reversing. 

\medbreak 

\begin{theorem} \label{anti}
If $x, y, z \in \LL$ then $x \leqslant y \Rightarrow y^q \leqslant x^q$ and $z \leqslant z^{qq}$. 
\end{theorem}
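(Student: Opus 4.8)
The plan is to reduce both assertions directly to parts (2) and (3) of Theorem \ref{Robbin}, translating throughout between the poset relation $\leqslant$ on $\LL$ and the deducibility relation $\vdash$ on formulas. Writing $x = [X]$, $y = [Y]$, $z = [Z]$, I recall that $x \leqslant y$ means $X \vdash Y$ (equivalently $\vdash X \supset Y$), and that $w^q = [QW]$ with $QW = W \supset Q$, so that $w^{qq} = [QQW]$.

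For the order-reversing property, the target inequality $y^q \leqslant x^q$ unpacks as the deduction $QY \vdash QX$. First I would note that the hypothesis $x \leqslant y$ supplies $\vdash X \supset Y$. Then I would invoke the instance $(X \supset Y) \supset (QY \supset QX)$ of Theorem \ref{Robbin} part (2): a single application of modus ponens yields $\vdash QY \supset QX$, hence $QY \vdash QX$, which is exactly $y^q \leqslant x^q$.

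For the second assertion, the inequality $z \leqslant z^{qq}$ unpacks as $Z \vdash QQZ$, and this is precisely Theorem \ref{Robbin} part (3); nothing further is required.

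I do not anticipate any genuine obstacle here: both claims become immediate once the definitions are unwound, since all the substantive content already resides in the established schemes of Theorem \ref{Robbin}. The only point demanding a moment's care is that these are statements about $\equiv$-classes, so in principle one should confirm that the choice of representatives is immaterial; but this is already guaranteed, since $(\bullet)^q$ was defined as $z \mapsto z \supset q$ using the operation $\supset$ whose descent to a well-defined operation on $\LL$ was verified earlier.
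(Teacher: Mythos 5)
Your proof is correct and follows essentially the same route as the paper's: the hypothesis $X \vdash Y$ gives $\vdash X \supset Y$ by DT, then part (2) of Theorem \ref{Robbin} and modus ponens yield $\vdash QY \supset QX$, hence $QY \vdash QX$, while the second assertion is exactly part (3) of Theorem \ref{Robbin}. The remark on well-definedness via the descent of $\supset$ to $\LL$ is a harmless (and accurate) extra observation that the paper leaves implicit.
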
 

\begin{proof} 
If $X \vdash Y$ then $\vdash X \supset Y$ whence Part (2) of Theorem \ref{Robbin} yields $\vdash QY \supset QX$ and therefore $QY \vdash QX$. Part (3) of Theorem \ref{Robbin} similarly tells us that $Z \vdash QQZ$. 
\end{proof} 

\medbreak 

We write $\LL^q$ for the image of $(\bullet)^q$: thus
$$\LL^q = \{ z \supset q : z \in \LL \} \subseteq \LL.$$
Also associated to $q$ is its up-set, defined by 
$$\uparrow q = \{ w \in \LL : q \leqslant w \} \subseteq \LL.$$
In fact these two sets, the one defined algebraically and the other order-theoretically, coincide. 

\medbreak 

\begin{theorem} \label{up}
$\LL^q = \; \uparrow q.$
\end{theorem}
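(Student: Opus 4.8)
The plan is to establish the two inclusions separately, both reducing to facts already in hand. The containment $\LL^q \subseteq \; \uparrow q$ is the easy half: a typical element of $\LL^q$ has the form $z^q = [Z \supset Q]$, and to place it in $\uparrow q$ I need only check $q \leqslant z^q$, that is, $Q \vdash Z \supset Q$. This is immediate from the instance $Q \supset (Z \supset Q)$ of ${\rm IPC}_1$ followed by MP. Since $z \in \LL$ was arbitrary, every element of $\LL^q$ lies above $q$.

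For the reverse containment $\uparrow q \; \subseteq \LL^q$, I would argue that an element $w$ of the up-set is its own double-$q$. Concretely, suppose $w \in \; \uparrow q$, so that $q \leqslant w$, i.e. $Q \vdash W$. The candidate preimage under $(\bullet)^q$ is $z := w^q$, for which $z^q = w^{qq}$; the task is therefore to show $w^{qq} = w$, equivalently $QQW \equiv W$. This is exactly the content of the remark following Theorem \ref{double}: since $Z \vdash QQZ$ always holds by Theorem \ref{Robbin}(3) and $QQZ \vdash Z$ holds precisely when $Q \vdash Z$, one obtains $QQW \equiv W$ under the standing hypothesis $Q \vdash W$. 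Hence $w = (w^q)^q \in \LL^q$.

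The two inclusions together give $\LL^q = \; \uparrow q$. The only step carrying real weight is the reverse inclusion, and there the entire burden falls on Theorem \ref{double}: the operation $(\bullet)^q$ fails to be a genuine pseudocomplement, so the double application $w \mapsto w^{qq}$ need not collapse to the identity in general, but it does collapse precisely on those $w$ lying above $q$ --- which is exactly the up-set in question. Recognizing that $w^q$ is the natural preimage, and that the hypothesis $q \leqslant w$ is precisely what Theorem \ref{double} requires to force $QQW \equiv W$, is the crux; the remaining verifications are routine.
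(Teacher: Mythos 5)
Your proof is correct and follows the same route as the paper: the inclusion $\LL^q \subseteq\; \uparrow q$ via the ${\rm IPC}_1$ instance $Q \supset (Z \supset Q)$, and the reverse inclusion via Theorem \ref{double} together with Theorem \ref{Robbin}(3), identifying $w$ with $w^{qq} = (w^q)^q$ when $Q \vdash W$. The only difference is expository: you spell out the choice of preimage $z = w^q$ explicitly, which the paper leaves implicit in the equation $W \equiv QQW = QW \supset Q$.
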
 

\begin{proof} 
The inclusion $\LL^q \subseteq \; \uparrow q$ follows by virtue of the deduction $Q \vdash Z \supset Q$ from an instance of ${\rm IPC}_1$. The inclusion $\uparrow q \subseteq \LL^q$ follows from Theorem \ref{double}: if $Q \vdash W$ then $W \equiv QQW = QW \supset Q$. 
\end{proof}

\medbreak 

In a similar vein, we note that 
$$\LL^q = \{ z \in \LL : z^{qq} = z \}$$
as a consequence of Theorem \ref{Robbin} parts (3) and (4) for instance. Accordingly, the restriction of $(\bullet)^q$ to $\LL^q$ is an involution: if $w = z^q \in \LL^q$ then $w^{qq} = z^{qqq} = z^q = w$. 

\medbreak 

As an up-set, $\LL^q$ is closed under going up: if $\LL^q \ni x \leqslant y \in \LL$ then $y \in \LL^q$. The behaviour of $\LL^q$ relative to $\supset$ and $\vee$ is similar. 

\medbreak 

\begin{theorem} \label{closure}
If $x \in \LL$ and $y \in \LL^q$ then $x \supset y \in \LL^q$ and $x \vee y \in \LL^q$. 
\end{theorem}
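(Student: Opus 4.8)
The plan is to derive both closure properties as immediate corollaries of Theorem \ref{closed}, which already packages the requisite syntactic work. The key preliminary observation is that membership $y \in \LL^q$ means exactly that $y$ lies in the image of $(\bullet)^q$, so that $y$ admits a representative of the special form $QB$: writing $y = b^q$ for some $b = [B]$, we have $y = [QB]$. I also fix a representative $x = [X]$.

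For the conditional, I unwind the definition of $\supset$ on $\LL$ to obtain $x \supset y = [X] \supset [QB] = [X \supset QB]$. Theorem \ref{closed} part (1) then furnishes an IPC formula $C$ with $X \supset QB \equiv QC$, so that $x \supset y = [QC] = [C]^q$ is manifestly of the form $z^q$ and hence lies in $\LL^q$.

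For the disjunction the argument runs in parallel: $x \vee y = [X \vee QB]$ by the definition of $\vee$ on $\LL$, and Theorem \ref{closed} part (2) supplies an IPC formula $D$ with $X \vee QB \equiv QD$, whence $x \vee y = [QD] = [D]^q \in \LL^q$.

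The only point requiring care---and the step I would flag as the crux---is the representation of $y$ by a formula of the form $QB$; everything else is bookkeeping, since the genuine content has been pushed into Theorem \ref{closed}. It is worth noting that no hypothesis is placed on $x$, in keeping with the interpretation of $\LL^q$ as an up-set (Theorem \ref{up}): the conclusion reflects that $\LL^q$ absorbs arbitrary elements under $\supset$ and $\vee$, precisely because the relevant syntactic closure was already secured upstream.
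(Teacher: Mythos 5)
Your proof is correct and takes essentially the same route as the paper: both arguments represent $y \in \LL^q$ by a formula of the form $QB$ and then invoke the two parts of Theorem \ref{closed} to exhibit $x \supset y$ and $x \vee y$ as $[QC] = c^q$ and $[QD] = d^q$ respectively. (Indeed, the paper's printed proof cites Theorem \ref{neg}, which is evidently a typographical slip for Theorem \ref{closed}, the result you correctly identify as carrying the syntactic content.)
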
 

\begin{proof} 
By hypothesis, $y = [Y] = [QB]$ for some $B \in L$: Theorem \ref{neg} provides $c = [C]$ and $d = [D]$ such that $x \supset y = [QC] = c^q \in \LL^q$ and $x \vee y = [QD] = d^q \in \LL^q$. 
\end{proof} 

\medbreak 

In particular, $\LL^q$ is closed under the operations $\supset$ and $\vee$. More particularly still, the poset $\LL^q$ is itself a join-semilattice. 

\medbreak 

The map $(\bullet)^q$ facilitates our defining on $\LL$ an operation that shares some properties with conjunction or meet. To be explicit, we define 
$$\stackrel{q}{\wedge} \; : \LL \times \LL \ra \LL^q$$
by the requirement that if $x \in \LL$ and $y \in \LL$ then 
$$x \wedge y := (x^q \vee y^q)^q$$
so that in terms of representative IPC formulas
$$[X] \wedge [Y] = [Q(QX \vee QY)].$$
As indicated, we shall henceforth dispense with the superscript on $\wedge$ that signifies its dependence on $q$. 

\medbreak 

\begin{theorem} 
Let $x, y \in \LL^q$ and $z \in \LL$. Then: \par 
(1) $x \wedge y \leqslant x$ and $x \wedge y \leqslant y$; \par 
(2) if $z \leqslant x$ and $z \leqslant y$ then $z \leqslant x \wedge y.$ 
\end{theorem}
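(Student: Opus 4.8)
The plan is to exploit that $(\bullet)^q$ behaves like an antitone Galois connection with itself: it is order-reversing by Theorem \ref{anti}, it restricts to an involution on $\LL^q$ (as recorded just after Theorem \ref{up}), and $\vee$ computes suprema in $\LL$. With these three facts the statement that $x \wedge y = (x^q \vee y^q)^q$ is the infimum of $x$ and $y$ is essentially the formal consequence of De Morgan's identity, and no genuinely new IPC computation is needed.

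For part (1), I would argue as follows. Since $\vee$ gives the supremum, $x^q \leqslant x^q \vee y^q$; applying the order-reversing map $(\bullet)^q$ gives
$$x \wedge y = (x^q \vee y^q)^q \leqslant (x^q)^q = x^{qq}.$$
Because $x \in \LL^q$, the involution property yields $x^{qq} = x$, so $x \wedge y \leqslant x$. The symmetric computation starting from $y^q \leqslant x^q \vee y^q$ and using $y \in \LL^q$ gives $x \wedge y \leqslant y$.

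For part (2), suppose $z \leqslant x$ and $z \leqslant y$. Applying the order-reversing $(\bullet)^q$ to each gives $x^q \leqslant z^q$ and $y^q \leqslant z^q$, so $z^q$ is an upper bound of both $x^q$ and $y^q$; since $x^q \vee y^q$ is the least such upper bound, $x^q \vee y^q \leqslant z^q$. Applying $(\bullet)^q$ once more reverses this to
$$z^{qq} \leqslant (x^q \vee y^q)^q = x \wedge y.$$
Finally Theorem \ref{anti} supplies $z \leqslant z^{qq}$, and composing the two inequalities yields $z \leqslant x \wedge y$, as required.

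The one point demanding care — and the reason the hypotheses are stated asymmetrically — is the distinction between where the involution is available and where only the inflationary inequality $z \leqslant z^{qq}$ holds. In part (1) the bound elements $x,y$ lie in $\LL^q$, so $x^{qq}=x$ may be used to collapse the double application of $(\bullet)^q$ exactly; in part (2) the element $z$ is an arbitrary member of $\LL$, for which $z^{qq}=z$ may fail, so the weaker $z \leqslant z^{qq}$ must carry the last step. Keeping this bookkeeping straight is the only obstacle, and it dictates precisely the form ``$x,y \in \LL^q$, $z \in \LL$'' of the statement.
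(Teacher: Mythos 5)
Your proof is correct and follows essentially the same route as the paper's: part (1) uses $x^q \leqslant x^q \vee y^q$, order-reversal of $(\bullet)^q$, and the involution $x^{qq}=x$ on $\LL^q$; part (2) uses that $z^q$ is an upper bound of $x^q, y^q$, that $\vee$ is the least upper bound, and the inflationary inequality $z \leqslant z^{qq}$ from Theorem \ref{anti}. Your closing remark on why the involution can be used only for $x,y \in \LL^q$ while $z$ needs the weaker $z \leqslant z^{qq}$ is exactly the bookkeeping the paper's proof also relies on.
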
 

\begin{proof} 
Throughout, we recall that $\vee$ furnishes pairwise suprema, that $(\bullet)^q$ reverses order (as in Theorem \ref{anti}) and that if $w \in \LL^q$ then $w^{qq} = w$ (as noted after Theorem \ref{up}). For (1) note that $x^q \leqslant x^q \vee y^q$ so that  $(x^q \vee y^q)^q \leqslant x^{qq} = x$; thus $x \wedge y \leqslant x$ while $x \wedge y \leqslant y$ by a similar argument. For (2) note that  $x^q \leqslant z^q$ and $y^q \leqslant z^q$ whence $x^q \vee y^q \leqslant z^q$ and therefore $z \leqslant z^{qq} \leqslant  (x^q \vee y^q)^q = x \wedge y$. 
\end{proof} 

\medbreak 

Now $\LL^q$ is actually a lattice: before this theorem, $\LL^q$ was already a join-semilattice; after this theorem, $\LL^q$ is also a meet-semilattice. In fact, the lattice $\LL^q$ is bounded: its top element is the $\equiv$-class ${\bf 1} = [Q \supset Q]$ of all theorems, as noted previously; its bottom element is the $\equiv$-class ${\bf 0} = q = [Q] = [(Q \supset Q) \supset Q]$ as is evident from Theorem \ref{up}. 

\medbreak 

\begin{theorem} \label{ps}
If $z \in \LL$ then $z^q \vee z = {\bf 1}$ and $z^q \wedge z = q$.  
\end{theorem}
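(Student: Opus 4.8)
The plan is to prove the two assertions separately, working with representative IPC formulas. Let $z = [Z]$ so that $z^q = [QZ]$, and recall that the unit $\mathbf 1$ is the class of theorems while $q = [Q]$ is the bottom element of $\LL^q$.

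For the first identity $z^q \vee z = \mathbf 1$, I would unwind the definitions: $z^q \vee z = [QZ] \vee [Z] = [(QZ \supset Z) \supset Z]$, and this is precisely an instance of the Peirce axiom scheme, hence a theorem. So the claim reduces to recognizing that $(QZ \supset Z) \supset Z = ((Z \supset Q) \supset Z) \supset Z$ is Peirce with $X := Z$ and $Y := Q$. This confirms the remark in the introduction that the pseudocomplement law here is a restatement of Peirce, and it is the easy half.

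For the second identity $z^q \wedge z = q$, I would compute the meet from its definition. We have $z^q \wedge z = ((z^q)^q \vee z^q)^q = (z^{qq} \vee z^q)^q$. Since $z \leqslant z^{qq}$ by Theorem~\ref{anti}, and $z^q \vee z = \mathbf 1$ already gives $z^{qq} \vee z^q = \mathbf 1$ by applying $(\bullet)^q$-monotonicity reasoning, the join $z^{qq} \vee z^q$ equals $\mathbf 1$ as well; in fact I would verify directly that $z^{qq} \vee z^q = \mathbf 1$ by the same Peirce instance applied to $z^q$ in place of $z$ (replacing $Z$ by $QZ$). Taking $(\bullet)^q$ of both sides then yields $z^q \wedge z = \mathbf 1^q$, so it remains to identify $\mathbf 1^q = q$. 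This last point follows because $\mathbf 1^q = [T \supset Q]$ for any theorem $T$, and $T \supset Q \equiv Q$ since $\vdash T$ gives $Q \vdash T \supset Q$ trivially while $T \supset Q, T \vdash Q$ by MP yields $T \supset Q \vdash Q$; thus $\mathbf 1^q = [Q] = q$.

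The main obstacle I anticipate is the bookkeeping in the second part: one must be careful that the meet $z^q \wedge z$ is defined via the operation $x \wedge y = (x^q \vee y^q)^q$, so computing it requires the double-negation $z^{qq}$ to re-enter, and the cleanest route is to establish $z^{qq} \vee z^q = \mathbf 1$ (another Peirce instance) rather than to manipulate $z$ and $z^q$ asymmetrically. Once that join is pinned to $\mathbf 1$, the whole computation collapses to evaluating $\mathbf 1^q$, and identifying $\mathbf 1^q = q$ is the genuinely substantive step, resting on Theorem~\ref{double} or directly on MP as above. Everything else is formal application of the definitions together with the fact that $\vee$ furnishes suprema and $(\bullet)^q$ reverses order.
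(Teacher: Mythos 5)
Your proposal is correct and follows essentially the same route as the paper: the first identity is the Peirce instance $((Z \supset Q) \supset Z) \supset Z$, and the second is obtained by unwinding $z^q \wedge z = (z^{qq} \vee z^q)^q$, recognizing $z^{qq} \vee z^q = \mathbf{1}$ as the first identity applied to $z^q$, and evaluating $\mathbf{1}^q = q$. The only difference is cosmetic: you spell out the verification that $\mathbf{1} \supset q = q$ (via ${\rm IPC}_1$ and MP), which the paper treats as immediate.
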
 

\begin{proof} 
If $Z \in L$ then $QZ \vee Z = ((Z \supset Q) \supset Z) \supset Z$ is an instance of the Peirce axiom scheme, whence $z^q \vee z = [QZ \vee Z] = {\bf 1}$; this proves the first identity. The second identity follows: as $z^{qq} \vee z^q = {\bf 1}$ so $z^q \wedge z = (z^{qq} \vee z^q)^q = {\bf 1}^q = {\bf 1} \supset q = q$. 
\end{proof}

\medbreak 

This reformulation $z^q \vee z = {\bf 1}$ of the Peirce scheme amounts to a partial `law of the excluded middle'. 

\medbreak 

In particular, the bounded lattice $\LL^q$ is complemented. 

\medbreak 

\begin{theorem} \label{dist}
The complemented bounded lattice $\LL^q$ is distributive. 
\end{theorem}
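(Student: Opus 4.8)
The plan is to reduce the assertion to a single distributive inequality and then to extract that inequality from the syntactic work already carried out in Theorem \ref{D}. Recall that in any lattice the two distributive laws are equivalent, and that the inequality $x \vee (y \wedge z) \leqslant (x \vee y) \wedge (x \vee z)$ holds automatically for all $x, y, z \in \LL^q$, since $x$ and $y \wedge z$ each lie below both $x \vee y$ and $x \vee z$. Hence it suffices to establish the reverse inequality
$$(x \vee y) \wedge (x \vee z) \leqslant x \vee (y \wedge z) \qquad (x, y, z \in \LL^q).$$

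First I would recast the special case of Theorem \ref{D} in lattice language. Writing $x = [QA]$ and $y = [QB]$ for arbitrary elements of $\LL^q$ and $z = [Z]$ for an arbitrary element of $\LL$, the formula $Q(Q(X \vee Y) \vee QZ)$ represents $(x \vee y) \wedge z$ --- this is immediate from the defining rule $[A] \wedge [C] = [Q(QA \vee QC)]$ applied with $A = X \vee Y$ --- while $X \vee Q(QY \vee QZ)$ represents $x \vee (y \wedge z)$. Consequently the special case of Theorem \ref{D} reads
$$(x \vee y) \wedge z \leqslant x \vee (y \wedge z) \qquad (x, y \in \LL^q, \; z \in \LL). \qquad (\star)$$
This mixed inequality is the crux of the matter; all the genuine difficulty has been discharged in proving Theorem \ref{D}, so from here the argument is purely lattice-theoretic.

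With $(\star)$ in hand, fix $x, y, z \in \LL^q$. Since $x \vee z \in \LL^q$ by Theorem \ref{closure}, I may apply $(\star)$ with $x \vee z$ in the role of the arbitrary right-hand argument to obtain $(x \vee y) \wedge (x \vee z) \leqslant x \vee \big(y \wedge (x \vee z)\big)$. Applying $(\star)$ a second time --- now with the pair $x, z$ in the first two slots and $y$ in the third, and using the symmetry of $\wedge$ --- gives $y \wedge (x \vee z) = (x \vee z) \wedge y \leqslant x \vee (y \wedge z)$. Feeding this into the previous line, invoking monotonicity of the join together with idempotence $x \vee x = x$, yields
$$(x \vee y) \wedge (x \vee z) \leqslant x \vee \big(x \vee (y \wedge z)\big) = x \vee (y \wedge z),$$
which is precisely the inequality sought. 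Combined with the automatic reverse inequality this proves the distributive identity, and hence $\LL^q$ is distributive.

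I expect the only real obstacle to lie in the bookkeeping of the translation step: one must check that $(\star)$ genuinely permits an arbitrary argument in $\LL$ on the right (so that $x \vee z$ is admissible), that $x \vee z$ lands back inside $\LL^q$ via Theorem \ref{closure}, and that $Q(Q(X \vee Y) \vee QZ)$ is indeed the representative of $(x \vee y) \wedge z$. Once Theorem \ref{D} has been correctly restated as $(\star)$, no further appeal to IPC syntax is needed and the distributive law follows by the two applications above.
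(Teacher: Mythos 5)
Your proof is correct and takes essentially the same route as the paper: the paper likewise reduces distributivity to the inequality $(x \vee y) \wedge z \leqslant x \vee (y \wedge z)$ for elements of $\LL^q$ and extracts it from the special case of Theorem \ref{D} via exactly the translation you perform in $(\star)$. The only difference is presentational: the paper cites Lemma 4.10 of Gr\"atzer [1] for the purely lattice-theoretic fact that this one inequality implies distributivity, whereas you prove that fact inline by applying $(\star)$ twice (with $x \vee z$, then with $y$, in the third slot), which is precisely the standard argument behind the cited lemma.
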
 

\begin{proof} 
According to Lemma 4.10 in [1] we need only show that if $x, y, z \in \LL^q$ then 
$$(x \vee y) \wedge z \leqslant x \vee (y \wedge z).$$
Theorem \ref{D} was prepared for this purpose: indeed, $X \equiv QQX$ while 
$$(x \vee y) \wedge z = [X \vee Y] \wedge [Z] = [Q(Q(X \vee Y) \vee QZ)]$$
and
$$x \vee (y \wedge z) = [X] \vee ([Y] \wedge [Z]) = [X \vee Q(QY \vee QZ)].$$
\end{proof} 

\medbreak 

According to a theorem of Huntington [2]  distributivity of the complemented lattice $\LL^q$ also follows from the fact that if the elements $z$ and $w$ of $\LL^q$ satisfy $z \wedge w = {\bf 0}$ then $w \leqslant z^q$. In fact, a stronger statement is true: if the elements $z$ and $w$ of $\LL$ itself satisfy $z \wedge w = q$ then $w \leqslant z^q$. To see this, note that from $q = z \wedge w = (z^q \vee w^q)^q$ there follows ${\bf 1} = q^q = (z^q \vee w^q)^{qq} = z^q \vee w^q$: in terms of representative elements, $\vdash QZ \vee QW$ so that $\vdash W \supset QZ$ by Theorem \ref{dM} and therefore $W \vdash QZ$; hence $w \leqslant z^q$ as claimed. 

\medbreak 

Thus, $\LL^q$ is actually a Boolean lattice. We summarize our findings as follows. 

\medbreak 

\begin{theorem} \label{main}
The Lindenbaum-Tarski algebra $\LL$ of IPC is a topped join-semilattice. If $q \in \LL$ is arbitrary then the inherited partial order makes $\LL^q = \{ z^q : z \in \LL \}$ into a Boolean lattice, such that if $x, y \in \LL^q$ then $\sup \{ x, y \} = x \vee y$ and $\inf \{x, y\} = (x^q \vee y^q)^q.$
\end{theorem}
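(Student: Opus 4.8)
The plan is to assemble the results accumulated in Sections 2 and 3, since the statement of Theorem \ref{main} is precisely a consolidation of properties each of which has already been secured. I would organize the argument around the defining attributes of a Boolean lattice: that $\LL^q$ is a bounded lattice, that it is complemented, that it is distributive, and that the supremum and infimum take the advertised forms. First I would dispatch the claim concerning $\LL$ itself. The poset $(\LL, \leqslant)$ is topped, its unit ${\bf 1}$ being the class of all theorems, and Theorem \ref{sup} shows that $x \vee y = \sup \{x, y\}$, so $\LL$ is a topped join-semilattice; this is the one place where the Peirce scheme is indispensable, entering through Theorem \ref{Robbin} part (7).

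Next I would record that $\LL^q$ is a bounded lattice. Closure under $\vee$ (Theorem \ref{closure}) makes $\LL^q$ a sub-join-semilattice of $\LL$ in which $\vee$ still computes suprema, while the theorem immediately following Theorem \ref{closure} shows that $x \wedge y = (x^q \vee y^q)^q$ computes infima among elements of $\LL^q$. The top element is ${\bf 1}$ and the bottom element is $q$, the latter because $\LL^q = \; \uparrow q$ by Theorem \ref{up}. I would then invoke Theorem \ref{ps} to conclude that $\LL^q$ is complemented: for $z \in \LL^q$ the element $z^q$ again lies in $\LL^q$ by definition and satisfies $z \vee z^q = {\bf 1}$ and $z \wedge z^q = q$, so $z^q$ is a complement of $z$ relative to the bounds ${\bf 1}$ and $q$.

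Distributivity is already in hand as Theorem \ref{dist}. Since a bounded complemented distributive lattice is by definition a Boolean lattice, $\LL^q$ is Boolean, and the identities $\sup \{x, y\} = x \vee y$ and $\inf \{x, y\} = (x^q \vee y^q)^q$ are exactly the descriptions of join and meet used throughout.

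The one point demanding care, rather than a genuine obstacle, is to check that suprema and infima are computed \emph{within} $\LL^q$ and not merely within the ambient $\LL$. For suprema this is immediate: $\LL^q$ is the up-set $\uparrow q$ (Theorem \ref{up}), hence closed upward, so the supremum of two of its elements taken in $\LL$ already lies in $\LL^q$ and serves as the supremum there. For infima the matter is more delicate, since $\LL$ need not be a meet-semilattice at all; the resolution is precisely the theorem following Theorem \ref{closure}, whose decisive input is that $(\bullet)^q$ restricts to an involution on $\LL^q$ (as noted after Theorem \ref{up}). With these compatibilities in place the assembly is routine, the substantive labour having been absorbed by the distributivity argument of Theorem \ref{dist}, which in turn rests on Theorem \ref{D}.
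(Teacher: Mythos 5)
Your proposal is correct and matches the paper's own treatment: the paper gives no separate proof of Theorem \ref{main}, presenting it explicitly as a summary of what precedes it --- Theorem \ref{sup} (topped join-semilattice), Theorem \ref{up} (bounds), Theorem \ref{closure} and the unnumbered theorem on $\wedge$ (lattice structure with the stated meet), Theorem \ref{ps} (complementation) and Theorem \ref{dist} (distributivity) --- which is exactly the assembly you perform, including the check that meets and joins are computed within $\LL^q$. The only blemish is your aside that the join-semilattice structure is ``the one place'' the Peirce scheme is indispensable: the paper points out that the identity $z^q \vee z = {\bf 1}$ of Theorem \ref{ps} is itself essentially a reformulation of the Peirce scheme (and Peirce also enters Theorems \ref{closed} and \ref{D}), but this remark does not affect the validity of your argument.
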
 

\medbreak 

Our statement of this result is intentionally reminiscent of a theorem due to Gr\"atzer and Schmidt [1]. To formulate the Gr\"atzer-Schmidt theorem, let $\MM$ be a semilattice: either a join-semilattice in which $\vee$ denotes pairwise supremum (and assume a unit ${\bf 1}$) or a meet-semilattice in which $\wedge$ denotes pairwise infimum (and assume a zero ${\bf 0}$). Let $\MM$ be correspondingly pseudo-complemented: in the $\vee$ case, the $\vee$-pseudocomplement $a^*$ of $a \in \MM$ satisfies $a \vee a^* = {\bf 1}$ and if $x \in \MM$ then $a \vee x = {\bf 1} \Rightarrow a^* \leqslant x$; in the $\wedge$ case, the  $\wedge$-pseudocomplement $a^*$ of $a \in \MM$ satisfies $a^* \wedge a = {\bf 0}$ and if $x \in \MM$ then $x \wedge a = {\bf 0} \Rightarrow x \leqslant a^*$. We should mention that traditional terminology is less even-handed: $\wedge$-pseudocomplements are simply called pseudocomplements; $\vee$-pseudocomplements are then called dual pseudocomplements. Perhaps it would be too much to suggest $a^*$ for the $\vee$-pseudocomplement of $a$ and $a_*$ for the $\wedge$-pseudocomplement of $a$. 

\medbreak 

In these terms, the Gratzer-Schmidt theorem is a dual pair: [1] presents the version for a meet-semilattice; here we state the version for a join-semilattice. 

\medbreak 
\noindent
{\bf Theorem} (Gr\"atzer-Schmidt). {\it Let $\MM$ be a pseudocomplemented semilattice with join $\vee$ and let $S(\MM) = \{ a^* : a \in \MM \}$ be its skeleton. The partial order on $\MM$ makes $S(\MM)$ into a Boolean lattice. For $a, b \in S(\MM)$ the join is $a \vee b$ and the meet is $(a^* \vee b^*)^*$. }

\medbreak 

The parallel between Theorem \ref{main} and the Gr\"atzer-Schmidt Theorem is clear but not exact. Our unary operation $(\bullet)^q$ is not a $\vee$-pseudocomplementation. Certainly, if $z \in \LL$ then $z^q \vee z = {\bf 1}$: this is essentially a reformulation of the Peirce scheme. However, it is not generally the case that if also $w \in \LL$ then $z \vee w = {\bf 1} \Rightarrow z^q \leqslant w$. Let $Q = Q_0 \supset Q_1$ where $Q_0$ is not a theorem; let $Z = Q$ and $W = Q_0$. In this case, $Z \vee W = ((Q_0 \supset Q_1) \supset Q_0) \supset Q_0$ is a theorem (according to Peirce) but $QZ \supset W = (Q \supset Q) \supset Q_0$ is not (for $Q \supset Q$ is a theorem but $Q_0$ is not); so $z \vee w = {\bf 1}$ is satisfied but $z^q \leqslant w$ is not. 

\medbreak 

The structures discussed here naturally depend on the choice of $q = [Q]$; investigation of this dependence is among topics reserved for a future publication. 

\bigbreak

\begin{center} 
{\small R}{\footnotesize EFERENCES}
\end{center} 
\medbreak

[1] G. Gr\"atzer, {\it Lattice Theory - First Concepts and Distributive Lattices}, W.H. Freeman (1971); Dover Publications (2009). 

[2] E. V. Huntington, {\it Sets of independent postulates for the algebra of logic}, Trans. Amer. Math. Soc. 5: 288-309 (1904). 

[3] J. W. Robbin, {\it Mathematical Logic - A First Course}, W.A. Benjamin (1969); Dover Publications (2006).

[4] P. L. Robinson, {\it The Peirce axiom scheme and suprema}, arXiv 1511.07074 (2015).

[5] P. L. Robinson, {\it $Q$-tableaux for Implicational Propositional Calculus}, arXiv 1512.03525 (2015). 

\medbreak

\end{document}